\theoremstyle{plain}
\newtheorem*{dn}{\bf Definition}
\newcommand{\tx}{\otimes }
\newcommand{\ri}{\rightarrow }
\newcommand{\Fm}{\widetilde{F}}
\DeclareMathOperator{\Coker}{Coker} 
\DeclareMathOperator{\Hom}{Hom} 
\DeclareMathOperator{\Aut}{Aut} \DeclareMathOperator{\Ext}{Ext}
\DeclareMathOperator{\Ker}{Ker}\DeclareMathOperator{\Obs}{Obs}
\DeclareMathOperator{\Red}{Red}
\newtheorem{thm}{\bf Theorem}
\newtheorem{lem}[thm]{\bf Lemma} 
\newtheorem{pro}[thm]{\bf Proposition} 
\newtheorem{hq}[thm]{\bf Corollary} 
\theoremstyle{definition}
\begin{document}

\centerline{\Large\bf Group Extensions of the Co-type of a Crossed
Module}\vspace{0.2cm} \centerline{\Large\bf and Strict Categorical
Groups}

\vspace{0.5cm} \centerline{\textsc{Nguyen Tien Quang}}

\centerline{\textit{ Department of Mathematics,   Hanoi National
University of Education,}}
 \centerline{\textit{cn.nguyenquang@gmail.com}}

\begin{abstract}
{\it Prolongations} of a group extension can be studied in a more
general situation that we call group extensions  of the co-type of a
crossed module. Cohomology classification of such extensions is
obtained by applying the obstruction theory of monoidal functors.
\end{abstract}

\noindent{\small{\bf AMS  Subject Classification:}   Primary: 18D10, Secondary:  20J05, 20J06}\\
{\small{\bf Keywords:} crossed module,  categorical group, group
extension, group cohomology, obstruction}

\section{Introduction}
A description of group extensions by means of factor sets leads to a
  close relationship between the extension problem of a type of algebras
   and the corresponding cohomology theory. This allows to study extension
    problems using cohomology as an effective method
\cite{Hoch-Serre53}.

Let $A$ and $\Pi$ be two groups, $A$ abelian. An extension of $A$ by
$\Pi$ is a short exact sequence
\begin{equation}\label{mrn}0\ri
A\stackrel{i}{\ri}B\stackrel{p}{\ri}\Pi\ri 1.\end{equation}
 A classical theorem in homological algebra asserts that the group of
isomorphism classes of extensions of $A$ by $\Pi$ with a fixed
operator  $\varphi:\Pi\ri$ Aut$A$ is isomorphic to the second
cohomology group $H^2_\varphi(\Pi,A)$, \cite{MacL63}.  After that,
the group $H^2$ was applied to the problem of classifying all group
extensions in the  different situations. This theorem has been made
more precise by establishing a categorical equivalence between the
category of extensions and a certain category whose objects are
2-cocycles \cite{Laven-Ro}. With the notion of  a {\it categorical
group} (or a {\it Gr-category} \cite{Sinh78}),  many aspects of
group extension problem are  raised to a categorical level which
help to obtain applications in algebra (see
 \cite{C2002}, \cite{Vitale03}). This article belongs
to this type.

The article is derived from the following classical problem. For a
group extension (\ref{mrn}) and a group homomorphism $\eta:\Pi'\ri
\Pi$, it follows from the existence of the pull-back of the pair
$(\eta,p)$  that there is an extension $E\eta$ making the following
diagram commute
\[\begin{diagram}
\xymatrix{ E\eta:&0\ar[r]&A\ar[r]^{j'} \ar@{=}[d]
&B'\ar[r]^{p'}\ar[d]^{\beta}&\Pi'\ar[r]
\ar[d]^\eta&1\\
E:&0\ar[r]&A\ar[r]^{j}&B\ar[r]^p&\Pi\ar[r]&1 }
\end{diagram}\]
(see \cite{MacL63} - Chapter III, \cite{Hilton70} - Chapter IV).

The problem is that with a given extension $E'$ and a homomorphism
  $\eta:\Pi'\ri \Pi$, let us find all extensions $E$ of $A$ by
$\Pi$ such that $E'=E\eta$. Then, the extension $E$ is said to be a
{\it $\eta$-prolongation} of $E'$. A brief and general description
of this problem was introduced in \cite{Weiss} (Proposition 5.1.1).
In \cite{QPC} we show the  better descriptions in the case of
  the central  extensions and
$\eta$ is an injection. Each prolongation induces a model which is
``dual" to a group extension of the type of a crossed module (see
Section 6). This leads to the notion of group {\it extension of
co-type} of a crossed module studied in this paper.

The plan of this paper is, briefly, as follows. In Section  2  we
recall reduced categorical groups, monoidal functors of  type
$(\varphi, f)$. In Section 3 we show the relation between the
category of crossed modules and the category of strict
Gr-categories, which is a useful tool in the next proofs. Next, we
introduce the notion of a $\zeta$-extension of the co-type of a crossed
module in Section 4, and we  construct the obstruction theory of a
$\zeta$-extension (Theorem \ref{dlc1}). In Section 5 we present
Schreier theory for $\zeta$-extensions of the co-type of a crossed
module (Theorem \ref{pl}).  The last section is devoted to applying
the results of previous sections to the problem of prolongations of
a group extension in \cite{QPC}.


\section{Preliminaries}
For later use, we recall here  some basic facts and results about
 categorical groups  (see   \cite{QTC}, \cite{Sinh78}).

\indent A {\it  categorical group }  is a monoidal category
$(\mathbb G ,\tx, I, \mathbf{a}, \mathbf{l}, \mathbf{r})$ in which
every object is invertible and the underlying category is a
groupoid. If $(F, \Fm, F_\ast)$ is a monoidal functor between
categorical groups, the isomorphism $ F_\ast:I'\ri FI$ can be
deduced from  $F$ and $\Fm$. Thus, we will refer to  $(F, \Fm)$ as a
monoidal functor.

Two monoidal functors $(F, \Fm)$ and $(F', \Fm')$ from $\mathbb G$
to $\mathbb G'$ are {\it homotopic} if there is a \emph{natural
monoidal equivalence} (or a {\it homotopy})
$\alpha:(F,\Fm,F_\ast)\ri (F',\Fm',F_\ast')$, which is a natural
equivalence such that
$F'_\ast=\alpha_I\circ F_\ast.$ 

 Each categorical group $\mathbb G$ determines three invariants, as follows:

 1. The set $\pi_0\mathbb G$ of isomorphism classes of the objects in
$\mathbb G$ is a group where the operation is induced by the tensor
product in $\mathbb G.$

2. The set $\pi_1\mathbb G$ of automorphisms of the unit object $I$
is a $\pi_0\mathbb G$-module.

3. An element $[k]\in H^3(\pi_0\mathbb G,\pi_1\mathbb G)$ is induced
by the associativity constraint of $\mathbb G$.

Based on the data: a group $\Pi$, a $\Pi$-module $A$ and $k\in
Z^3(\Pi,A)$, we construct a categorical group, denoted by
Red$(\Pi,A,k)$ whose objects   are elements $x\in \Pi$ and the
morphisms are automorphisms $(x,a):x\ri x,$ where $x \in \Pi,a\in
A$. The composition of two morphisms is induced by the addition in
 $A$
$$(x,a)\circ (x,b)=(x,a+b).$$

The tensor products are given by
\begin{gather*}
x\tx y=x.y,\quad x,y\in\Pi,\\
(x,a)\tx(y,b)=(xy,a+xb),\quad a,b\in A.
\end{gather*}
The unit constraints of the  categorical group $\Red(\Pi,A,k)$ are
 strict, and its associativity constraint
  is ${\bf a}_{x,y,z}=(xyz,k(x,y,z))$.

  In the case where $\Pi,A,[k]$ are three invariants of a
  categorical group $\mathbb G$ then $\Red(\Pi,A,k)$ is monoidally
  equivalent to $\mathbb G$ and it is called a {\it reduction} of
  $\mathbb G$, hence denoted by $\mathbb G(k)$.



A functor $F: \Red(\Pi,A,k)\rightarrow \Red(\Pi',A',k')$ is  of {\it
type} $(\varphi,f)$ if
\begin{equation*}
F(x)=\varphi(x), \ F(x,a)=(\varphi(x), f(a)),
\end{equation*}
where $\varphi:\Pi\ri \Pi'$, $f:A\ri A'$ are group homomorphisms
satisfying $f(xa)=\varphi (x)f(a)$, for $x\in \Pi, a\in A.$ Note
that if $\Pi'$-module $A'$ is considered as a $\Pi$-module under the
action  $xa'=\varphi(x).a'$, then $f:A\ri A'$ is a homomorphism of
$\Pi$-modules. In this case, we call $(\varphi,f)$ a {\it pair of
homomorphisms} and call
\begin{equation}\label{ctr} \xi=\varphi^\ast
k'-f_\ast k\in Z^3(\Pi,A')
\end{equation}
an {\it obstruction} of the functor $F$, where $\varphi^\ast,f_\ast$ are
canonical homomorphisms
$$ Z^3(\Pi,A)\stackrel{f_\ast}{\longrightarrow} Z^3(\Pi,A')
\stackrel{\varphi^\ast}{\longleftarrow}Z^3(\Pi',A').$$
The results on monoidal functors of type $(\varphi,f)$ stated in
\cite{QTC} are summarized in the following proposition.

\begin{pro}\label{tk}  Let $\mathbb G$ and $\mathbb G'$ be two
 categorial groups, $\mathbb G(k)$ and $\mathbb
G'(k')$ be their reductions, respectively.

 $\mathrm{i)}$ Every monoidal functor $(F,\widetilde{F}):\mathbb G\ri\mathbb G'$
 induces  one $\mathbb G(k)\ri \mathbb G'(k')$ of type $(\varphi,f)$.

 $\mathrm{ii)}$  Every  monoidal functor $\mathbb G(k)\ri \mathbb G'(k')$
  is a functor of type $(\varphi,f)$.

$\mathrm{iii)}$   A   functor $F:\mathbb G(k)\ri \mathbb G'(k')$ of
type $(\varphi, f)$ is realizable, that is, it induces a
 monoidal functor, if and only if its obstruction
$[\xi]$ vanishes in
   $H^3_\Gamma(\Pi, A')$. Then, there is a bijection
 \begin{equation*}
     {\mathrm{Hom}}_{(\varphi, f)}[\mathbb G(k),\mathbb G'(k')]\leftrightarrow H^2_\Gamma(\Pi,
 A'),
\end{equation*}
where $\Hom_{(\varphi, f)}[\mathbb G(k),\mathbb G'(k')]$ is the set
of all homotopy classes of monoidal  functors of type $(\varphi, f)$
from $\mathbb G(k)$ to $\mathbb G'(k')$.
\end{pro}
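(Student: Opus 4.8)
The plan is to establish the three parts in the order (ii), (i), (iii), since (i) follows from (ii) once we invoke the monoidal equivalence between a categorical group and its reduction, while (iii) carries the substantive obstruction-theoretic content.

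For (ii), let $\Phi:\Red(\Pi,A,k)\to\Red(\Pi',A',k')$ be a monoidal functor. On objects set $\varphi(x)=\Phi(x)$; since a monoidal functor induces a group homomorphism on $\pi_0=\Pi$, the map $\varphi:\Pi\to\Pi'$ is a homomorphism. For a morphism $(x,a):x\to x$, functoriality forces $\Phi(x,a)=(\varphi(x),f_x(a))$ with $f_x:A\to A'$ additive, because composition of automorphisms at $x$ is addition in $A$. To see that $f_x$ is independent of $x$, I would use the identity $(x,a)=(e,a)\otimes\mathrm{id}_x$ in $\Red(\Pi,A,k)$ together with naturality of $\widetilde{\Phi}$: applying $\Phi$ and transporting along $\widetilde{\Phi}$ gives $\Phi(x,a)=(\varphi(x),f(a))$, where $f=f_e$ is the map induced on $\pi_1=A$. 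The compatibility $f(xa)=\varphi(x)f(a)$ is precisely the statement that $f$ is a homomorphism of $\pi_0$-modules. Thus $\Phi$ is of type $(\varphi,f)$. For (i), recall from the excerpt that $\mathbb{G}(k)=\Red(\Pi,A,k)$ is monoidally equivalent to $\mathbb{G}$, and likewise $\mathbb{G}'(k')\simeq\mathbb{G}'$. Fixing monoidal equivalences $P:\mathbb{G}(k)\to\mathbb{G}$ and $P':\mathbb{G}'\to\mathbb{G}'(k')$, the composite $P'\circ(F,\widetilde{F})\circ P$ is a monoidal functor $\mathbb{G}(k)\to\mathbb{G}'(k')$, which by part (ii) is of type $(\varphi,f)$ with $\varphi=\pi_0F$ and $f=\pi_1F$.

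The heart of the argument is (iii). Writing $F(x)=\varphi(x)$ and $F(x,a)=(\varphi(x),f(a))$, a monoidal structure $\widetilde{F}$ on $F$ consists of isomorphisms $\widetilde{F}_{x,y}:F(x)\otimes F(y)\to F(xy)$; as $\varphi$ is a homomorphism both sides are the object $\varphi(xy)$, so $\widetilde{F}_{x,y}=(\varphi(xy),g(x,y))$ for a uniquely determined $g:\Pi\times\Pi\to A'$. I would then write out the coherence axiom comparing the two passages from $(Fx\otimes Fy)\otimes Fz$ to $F(xyz)$. Since the associativity constraints of $\mathbb{G}(k)$ and $\mathbb{G}'(k')$ are given by $k$ and $k'$ and $F$ acts on morphisms by $f$, the diagram collapses to
\begin{equation*}
\varphi(x)\,g(y,z)-g(xy,z)+g(x,yz)-g(x,y)=k'(\varphi x,\varphi y,\varphi z)-f\big(k(x,y,z)\big),
\end{equation*}
that is, $\delta g=\varphi^\ast k'-f_\ast k=\xi$. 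Hence a monoidal structure on $F$ is exactly a $2$-cochain $g$ with $\delta g=\xi$; such a $g$ exists if and only if the cocycle $\xi$ is a coboundary, i.e. $[\xi]=0$ in $H^3_\Gamma(\Pi,A')$, which proves the realizability criterion. Verifying that the coherence diagram genuinely reduces to the displayed identity—tracking the module action through the associator and all signs—is the main technical obstacle; the unit constraints contribute nothing, being strict in the reductions.

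For the counting statement, when $[\xi]=0$ the monoidal structures on $F$ form the set of solutions $g$ of $\delta g=\xi$, a torsor under $Z^2(\Pi,A')$. I would then check that $(F,\widetilde{F})$ and $(F,\widetilde{F}')$ of type $(\varphi,f)$ are homotopic exactly when the associated cochains differ by a coboundary: a homotopy $\alpha$ has components $\alpha_x=(\varphi(x),t(x))$, and the natural-monoidal-equivalence condition $F'_\ast=\alpha_I\circ F_\ast$ together with compatibility of $\alpha$ with $\widetilde{F},\widetilde{F}'$ translates into $g'-g=\delta t$ for the $1$-cochain $t:\Pi\to A'$. (Here one also notes that any natural transformation between functors into a reduced categorical group forces them to agree on objects and morphisms, so homotopic functors of type $(\varphi,f)$ share the same $\varphi$ and $f$.) Passing to homotopy classes thus identifies $\Hom_{(\varphi,f)}[\mathbb{G}(k),\mathbb{G}'(k')]$ with the solutions of $\delta g=\xi$ modulo $B^2(\Pi,A')$; fixing one solution as base point yields the bijection with $Z^2(\Pi,A')/B^2(\Pi,A')=H^2_\Gamma(\Pi,A')$.
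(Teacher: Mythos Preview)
Your argument is essentially correct and is the standard cohomological proof of this fact. Note, however, that the paper itself does not prove this proposition: it is stated as a summary of results from \cite{QTC} (see the sentence immediately preceding the proposition, ``The results on monoidal functors of type $(\varphi,f)$ stated in \cite{QTC} are summarized in the following proposition''), so there is no in-paper proof to compare against. What you have written is, up to conventions, the argument one would expect to find in that reference: reduce to the skeletal case via the monoidal equivalences $\mathbb G\simeq\mathbb G(k)$, read off $(\varphi,f)$ from the action on $\pi_0$ and $\pi_1$, and then translate the hexagon axiom for $\widetilde F$ into the cochain equation $\delta g=\pm\xi$.

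Two minor remarks. First, in part (ii) your deduction that $\varphi$ is a group homomorphism genuinely uses the monoidal structure (the existence of the isomorphism $\widetilde\Phi_{x,y}$ forces $\varphi(x)\varphi(y)=\varphi(xy)$ because the reduced category is skeletal); you use this implicitly but it is worth making explicit, since a bare functor between reductions need not have $\varphi$ multiplicative. Second, depending on the orientation chosen for the coherence hexagon one obtains $\delta g=\xi$ or $\delta g=-\xi$; you flag this yourself, and of course it does not affect either the vanishing criterion or the $H^2$-torsor structure. The homotopy computation ($g'-g=\delta t$ from the monoidal-naturality square for $\alpha$) is handled correctly.
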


\section{ Categorical groups associated to a crossed module}

A categorical group is {\it strict}, according to Joyal and Street
\cite{J-S}, if all of its constraints are strict and every object
has a strict inverse ($x\otimes y=1=y\otimes x$). Brown and Spencer
\cite{Br76} called it a {\it $\mathcal G$-groupoid}. The authors of
\cite{Br76} showed that there is a categorical equivalence between
the category of crossed modules and that of $\mathcal G$-groupoids,
  and hence crossed modules can be studied by means of
category theory. The Brown-Spencer equivalence has recently
developed for the category of (braided) crossed bimodules (see
\cite{Q14}, Theorems 4.3, 4.4).

\begin{dn} \emph{ A {\it crossed module} is a quadruple $(B,D,d,\theta)$ where
$d:B\ri D,\;\theta:D\ri$ Aut$B$ are group homomorphisms such that
the following relations hold\\
\indent $C_1.$ $\theta d=\mu$,\\
\indent $C_2.$  $ \label{ct4b}d(\theta_x(b))=\mu_x(d(b)),\;x\in D,
b\in B,$\\
 where $\mu_x$ is an inner automorphism given by
conjugation of $x$.} \end{dn}


\noindent\begin{dn} \emph{ A {\it homomorphism}
$(f_1,f_0):(B,D,d,\theta)\ri (B',D',d',\theta')$ of crossed modules
consists of group homomorphisms   $f_1:B\ri B'$, $f_0:D\ri D'$ satisfying\\ 
\indent $H_1. \ f_0d=d'f_1,$\\
\indent $H_2. \ f_1(\theta_xb)=\theta'_{f_0(x)}f_1(b)$,\\
 for all $x\in D,b\in B$.}\end{dn}
In the present paper, the crossed module $(B,D,d,\theta)$ is
sometimes denoted by
 $B\stackrel{d}{\rightarrow}D$.
For convenience, we denote by the addition for the operation in $B$
and by the multiplication for that in $D$.

The following properties follow from the definition of a crossed
module.
\begin{pro} \label{md2}
Let $(B,D,d,\theta)$ be a crossed module.

$\mathrm{i)}\; \mathrm{Ker}d\subset Z(B)$.

$\mathrm{ii)} \;\mathrm{Im}d$ is a normal subgroup in $D$.

$\mathrm{iii)}$ The homomorphism $\theta$ induces a homomorphism
$\varphi:D\ri \mathrm{Aut}(\mathrm{Ker}d)$ given by
$$\varphi_x=\theta_x|_{\mathrm{Ker}d}.$$

$\mathrm{iv)}\; \mathrm{Ker}d$ is a left $\mathrm{Coker}d$-module
with the action
$$sa=\varphi_x(a),\ \ a\in\mathrm{ Ker}d, \ x\in s\in \mathrm{Coker}d.$$
\end{pro}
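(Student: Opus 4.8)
The four assertions are the standard structural consequences of the crossed-module axioms $C_1$ and $C_2$, and I would establish them in the stated order, since the module structure in (iv) relies on the centrality proved in (i). Throughout I keep the conventions fixed above: $B$ is written additively (so conjugation by $b$ sends $b'$ to $b+b'-b$) and $D$ multiplicatively, and I read $C_1$ as $\theta_{d(b)}=\mu_b$ and $C_2$ as $d(\theta_x(b))=x\,d(b)\,x^{-1}$.

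For (i), the plan is to take $a\in\Ker d$ and apply $C_1$ at the element $a$ itself. Since $d(a)=1$ and $\theta$ is a homomorphism, $\theta_{d(a)}=\theta_1=\mathrm{id}_B$, whereas $C_1$ gives $\theta_{d(a)}(b)=a+b-a$ for every $b\in B$. Comparing the two expressions yields $a+b-a=b$, i.e. $a+b=b+a$ for all $b$, so $a\in Z(B)$. In particular $\Ker d$ is abelian, a fact I will need in (iv).

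For (ii), I would invoke $C_2$ directly: for $x\in D$ and $d(b)\in\mathrm{Im}\,d$ one has $x\,d(b)\,x^{-1}=\mu_x(d(b))=d(\theta_x(b))\in\mathrm{Im}\,d$, so $\mathrm{Im}\,d$ is stable under conjugation and hence normal. For (iii), the same identity $C_2$ applied to $a\in\Ker d$ gives $d(\theta_x(a))=x\,d(a)\,x^{-1}=1$, so $\theta_x$ carries $\Ker d$ into itself; running the same argument for $\theta_{x^{-1}}$ shows the restriction $\varphi_x=\theta_x|_{\Ker d}$ is a bijection, hence an automorphism of the abelian group $\Ker d$. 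That $x\mapsto\varphi_x$ is a homomorphism $\varphi:D\to\Aut(\Ker d)$ is then inherited from the homomorphism property of $\theta$.

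The only genuinely delicate point, and the step I expect to be the main obstacle, is the well-definedness in (iv): one must verify that $sa:=\varphi_x(a)$ is independent of the choice of representative $x$ of the coset $s\in\Coker d=D/\mathrm{Im}\,d$ (this quotient being a group by (ii)). This is exactly where (i) is indispensable. If $x'=x\,d(b)$ is another representative, then for $a\in\Ker d$ I would compute $\varphi_{x'}(a)=\theta_{x\,d(b)}(a)=\theta_x(\theta_{d(b)}(a))=\theta_x(b+a-b)$, and then use (i): since $a$ is central, $b+a-b=a$, so $\theta_{d(b)}(a)=a$ and therefore $\varphi_{x'}(a)=\theta_x(a)=\varphi_x(a)$. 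Thus the action descends to $\Coker d$, and because $\varphi$ is a homomorphism while $\Ker d$ is abelian by (i), this equips $\Ker d$ with the structure of a left $\Coker d$-module, as claimed.
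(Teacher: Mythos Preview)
Your argument is correct and proceeds exactly as one would expect: each item is checked directly from the axioms $C_1$ and $C_2$, with (i) feeding into the well-definedness check in (iv). The paper itself does not supply a proof of this proposition at all---it simply records the four properties as immediate consequences of the definition---so your write-up is a faithful (and more detailed) unpacking of what the paper leaves implicit.
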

As mentioned above, a categorical group can be seen as a crossed
module \cite{Br76}, \cite{J-S}. To help motivate the reader, we
present this fact in detail.

 For each crossed module $(B,D,d, \theta)$, one can
construct a strict  categorical group  $\mathbb G_{B\rightarrow
D}=\mathbb G$, called the  categorical group {\it associated to} the
crossed module $B\ri D$, as follows.
$$\mathrm{Ob}\mathbb G=D, \ \mathrm{Hom}(x,y)=\{b\in B\ |\ x=d(b)y\},$$ where $x,y$ are objects of $\mathbb G$.
 The composition of two
morphisms is given by
$$(x\stackrel{b}{\ri}y\stackrel{c}{\ri}z)=(x\stackrel{b+c}{\ri}z).$$
The tensor functor is given by  $x\otimes y=xy$ and
\begin{equation}\label{mt}
(x\stackrel{b}{\ri}y)\otimes(x'\stackrel{b'}{\ri}y')=(xx'\stackrel{b+\theta_yb'}{\longrightarrow}yy').
\end{equation}

Conversely, for a strict  categorical group $(\mathbb G, \otimes)$,
we define a  crossed module $C_\mathbb G=(B,D,d,\theta)$ as follows.
Set
 \begin{equation} D=\mathrm{ Ob}\mathbb G,\;\ B=\{x\xrightarrow{b}1 | x\in D  \}.\notag\end{equation}
The operations on $D$ and on $B$ are given by
$$xy=x\otimes y,\;\ b+c=b\otimes c, $$
respectively. Then, the set $D$ becomes a group in which the unit is
$1$, the inverse of $x$ is $x^{-1}$ ($x\otimes x^{-1}=1$). The set
$B$ is a group in which the unit is the morphism
$(1\xrightarrow{id_1} 1)$ and the inverse of $(x\xrightarrow{b} 1)$
is the morphism $(x^{-1}\xrightarrow{\overline{b}} 1 )(b\otimes
\overline{b}=id_1)$.

The homomorphisms $d:B\rightarrow D$ and $\theta: D\rightarrow \Aut
B$ are respectively given by
$$d(x\xrightarrow{b}  1)=x,$$
$$\theta _y(x\xrightarrow{b}1)= (yxy^{-1}\xrightarrow{id_y + b + id_{y^{-1}}}  1).$$ 


The following result shows the relationship between homomorphisms of
crossed modules and monoidal functors of associated categorical
groups.

\begin{pro}[\cite{CQT2}\label{t1}]
Let $(f_1,f_0):(B,D,d,\theta)\ri(B',D',d',\theta')$ be a homomorphism of
crossed module.

 $\mathrm{i)}$ There is a functor  $F:\mathbb G_{B\ri
D}\ri\mathbb G_{B'\ri D'}$ given by
$$F(x)=f_0(x),\;F(b)=f_1(b),$$
 where $x\in \mathrm{Ob}\mathbb G$, $b\in \mathrm{Mor}\mathbb G,$

  $\mathrm{ii)}$ Natural isomorphisms $\widetilde{F}_{x,y}$
together with $F$ is a monoidal functor if and only if
 $\widetilde{F}_{x,y}=\varphi(\overline{x},\overline{y})$, where $\varphi\in Z^2(\mathrm{Coker}d,\mathrm{Ker}d').$
\end{pro}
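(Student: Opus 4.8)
The plan is to treat the two parts separately, the first being a routine verification and the second carrying the real content.

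For part (i) I would check directly that the assignments $F(x)=f_0(x)$ and $F(b)=f_1(b)$ respect the categorical structure. The only nontrivial point is that $F$ sends morphisms to morphisms: if $b\colon x\to y$ in $\mathbb G_{B\to D}$, i.e. $x=d(b)y$, then applying $f_0$ and invoking $H_1$ ($f_0d=d'f_1$) gives $f_0(x)=d'(f_1(b))\,f_0(y)$, so $f_1(b)$ is indeed a morphism $f_0(x)\to f_0(y)$ in $\mathbb G_{B'\to D'}$. Preservation of composition and identities is then immediate, since composition is addition in $B$ (resp. $B'$), $f_1$ is a group homomorphism, and $f_1(0)=0$.

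For part (ii) I would first pin down what a constraint can be. Because $f_0$ is a homomorphism, $F(x)\otimes F(y)=f_0(x)f_0(y)=f_0(xy)=F(x\otimes y)$ as objects, so each $\widetilde{F}_{x,y}$ is an automorphism of the object $f_0(xy)$; by the description of morphisms in the associated categorical group, such an automorphism is an element $\widetilde{F}_{x,y}\in\Ker d'$. The computation that drives everything is that on morphisms $F$ already commutes \emph{strictly} with $\otimes$: for $b\colon x\to\bar x$ and $b'\colon y\to\bar y$, formula (\ref{mt}) together with $H_2$ gives $F(b\otimes b')=f_1(b)+\theta'_{f_0(\bar x)}f_1(b')=F(b)\otimes F(b')$, so the two vertical legs of the naturality square for $\widetilde F$ are the \emph{same} element $v$ of $B'$.

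Next I would extract the shape of $\widetilde F$ from naturality. Reading the naturality square as an equation in $B'$ and using that $\Ker d'\subset Z(B')$ (Proposition \ref{md2}(i)), the common vertical term $v$ cancels and naturality collapses to $\widetilde F_{x,y}=\widetilde F_{\bar x,\bar y}$ whenever morphisms $x\to\bar x$ and $y\to\bar y$ exist, i.e. whenever $x,\bar x$ and $y,\bar y$ lie in the same cosets of $\mathrm{Im}\,d$. Hence $\widetilde F_{x,y}$ depends only on the classes $\overline x,\overline y\in\Coker d$, so $\widetilde F_{x,y}=\varphi(\overline x,\overline y)$ for a well-defined map $\varphi\colon\Coker d\times\Coker d\to\Ker d'$, where $\Ker d'$ is viewed as a $\Coker d$-module through $f_0$ (note $f_0(\mathrm{Im}\,d)\subset\mathrm{Im}\,d'$ by $H_1$).

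Finally I would impose the remaining monoidal axioms. Since both categorical groups are strict, the associativity coherence reads $\widetilde F_{xy,z}\circ(\widetilde F_{x,y}\otimes 1)=\widetilde F_{x,yz}\circ(1\otimes\widetilde F_{y,z})$; computing each composite in $\Ker d'$ via (\ref{mt}) — the factor $\otimes 1$ contributes nothing, while $1\otimes\widetilde F_{y,z}$ contributes $\theta'_{f_0(x)}\varphi(\overline y,\overline z)=\overline x\cdot\varphi(\overline y,\overline z)$ — turns this axiom into exactly the $2$-cocycle identity
$$\overline x\cdot\varphi(\overline y,\overline z)+\varphi(\overline x,\overline{yz})=\varphi(\overline{xy},\overline z)+\varphi(\overline x,\overline y),$$
so $\varphi\in Z^2(\Coker d,\Ker d')$; the unit axioms are then met by setting $F_\ast=-\varphi(1,1)$. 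Running all of these computations backwards gives the converse, establishing the biconditional. The main obstacle is the naturality step: the whole argument hinges on first recognizing that the two legs of the naturality square coincide and then using the centrality of $\Ker d'$ to force $\widetilde F$ to factor through $\Coker d\times\Coker d$; once that is in hand, the cocycle condition falls out of associativity mechanically.
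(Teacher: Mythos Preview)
The paper does not supply a proof of this proposition; it is quoted from \cite{CQT2}, so there is no in-paper argument to compare against. Your proof is correct and is the natural direct verification: part (i) is routine from $H_1$, and in part (ii) you correctly identify that (a) each $\widetilde F_{x,y}$ lies in $\Ker d'$ since $f_0$ is multiplicative, (b) $F$ is already strict on morphisms by $H_2$, so the two legs of the naturality square agree and centrality of $\Ker d'$ (Proposition~\ref{md2}(i)) forces $\widetilde F$ to descend to $\Coker d\times\Coker d$, and (c) the strict hexagon reduces to the $2$-cocycle identity for the induced $\Coker d$-action on $\Ker d'$. The converse is indeed obtained by reversing these steps. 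One small point worth making explicit for the reader: the $\Coker d$-module structure on $\Ker d'$ given by $\overline x\cdot a=\theta'_{f_0(x)}(a)$ is well defined because a change of representative $x\mapsto d(b)x$ alters $\theta'_{f_0(x)}$ by an inner automorphism $\mu_{f_1(b)}$ (via $H_1$ and $C_1$), which acts trivially on the central subgroup $\Ker d'$; you allude to this but the reason deserves a sentence.
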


{\it Note.} In the category of $\mathcal G$-groupoids in
\cite{Br76}, the morphisms $(F,\widetilde{F})$ satisfy
$\widetilde{F}=id.$

\section {Group extensions of the co-type of a crossed module}
In this section we introduce a concept which is ``dual" to the
concept of group extension of type $B\stackrel{d}{\rightarrow}D$ in
\cite{Br94,Br96}. As will be showed later, it is also regarded as a
generalization of the prolongation problem of group extensions
\cite{QPC}.

\begin{dn} \emph{Let $d: B\rightarrow D$ be a crossed module. A group
\emph{extension} of
 $A$ of \emph{ co-type }
$B\stackrel{d}{\rightarrow}D$  is  a diagram of group homomorphisms
\[  \begin{diagram}
\xymatrix{\;&\;&  B \ar[r]^d \ar[d]_\beta &D \ar@{=}[d]&\; \\
 0 \ar[r]&A \ar[r]^j &E \ar[r]^p &D  \ar[r] &1,}
\end{diagram}\]
where the bottom  row is exact,  $j(A)\subset Z(E)$, the pair
$(\beta,id_D)$ is a morphism of crossed modules.}\end{dn}


Since the bottom row is exact and since  $p\circ \beta\circ i=d\circ
i=0$, where $i:\Ker d\ri B$ is an inclusion, there exists a unique
homomorphism $\zeta: \mathrm{Ker}d\ri A$ such that the left hand
side square commutes
\begin{equation}\label{ct0} \begin{diagram}
\xymatrix{\;&\mathrm{Ker}d \ar[r]^i\ar@{.>}[d]_\zeta& B \ar[r]^d \ar[d]_\beta &D \ar@{=}[d]&\; \\
 0 \ar[r]&A \ar[r]^j&E \ar[r]^p &D  \ar[r] &1.}
\end{diagram}
\end{equation}

This homomorphism is defined by \begin{equation}\label{ld}j(\zeta
c)=\beta(i c),\;c\in \mathrm{Ker}d.
\end{equation}
Moreover,
$\zeta$ depends only on the equivalence class of the extension $E$.

{\it Note on terminologies.} Since the homomorphism $\theta'$ of the
crossed module $E\stackrel{p}{\rightarrow}D$ is the conjugation and
since $j(A)\subset Z(E)$, $\theta'_x$ acts on $A$ as an identity.
Thus, the group $A$ can be seen as a $D$-module with the trivial
action. Then,
\begin{equation}\label{ld1}\zeta(sc)=\zeta(c),\;\ s\in \Coker d, c\in \Ker d.
\end{equation}
Indeed, By Proposition \ref{md2}, $\theta_x(c)\in \Ker d,$ so one
has
\[\begin{aligned}j\zeta(\theta_x(c))&\stackrel{(\ref{ld})}{=}\beta i(\theta_x
c)=\beta(\theta_x c)\\
&\stackrel{(H_2)}{=}\theta'_x(\beta
c)=\theta'_x(j\zeta(c))=j\zeta(c).
\end{aligned}\]
Since $j$ is injective, we obtain (\ref{ld1}). Thus, it defines a
trivial $\Coker d$-module structure on $\mathrm{Im}\zeta$.

 The homomorphism $\zeta:\Ker d\ri A$ satisfying the condition \eqref{ld1} is called an {\it abstract $\zeta$-kernel}
 of the crossed module $B\stackrel{d}{\rightarrow}D$.
An extension of $A$  of co-type $B\stackrel{d}{\rightarrow}D$
inducing $\zeta:\Ker d\ri A$ is said to be an  {\it extension of the
abstract $\zeta$-kernel}, or a {\it $\zeta$-extension} of co-type
$B\stackrel{d}{\rightarrow}D$.

$\bullet$ {\it The obstruction theory: the case $\zeta $ is
surjective}

 From now on, assume that $\zeta: \Ker d\ri A$ is an
onto homomorphism. We use the obstruction theory of monoidal
functors to deal with the existence of  $\zeta$-extensions.

 Let $\mathbb G= \mathbb G_{B\rightarrow
D}$ be the   categorical group associated to crossed module
$B\rightarrow D$. Since $\pi_0\mathbb G=\Coker d$ and $\pi_1\mathbb
G=\Ker
d$, the reduced  categorical group $\mathbb G(k)$ is of form 
$$\mathbb G(k)= \Red(\mathrm{Coker} d, \mathrm{Ker} d,k), \; [k]\in H^3 (\mathrm{Coker} d, \mathrm{Ker} d),$$
where the associativity constraint $k$
 is defined as follows. Choose a set of representatives $\left\{x_s\ |\ s\in \mathrm{Coker}d\right\}$ in $D$.
For each $x\in s$ choose an element $b_x\in B$ satisfying
$x_s=d(b_x)x$, $b_{x_s}=0$. According to \cite{Sinh78}, the family
$(x_s, b_x)$ is called a {\it stick}. It defines a monoidal functor
 $(H,\widetilde{H}): \mathbb G(k)\ri \mathbb G$ by
 $$H(s)=x_s,\ H(s,a)=a,\ \widetilde{H}_{r,s}=-b_{x_rx_s}.$$
Then, $k$ is determined by the following commutative diagram
 \begin{equation}\label{qs}
\begin{CD}
x_s(x_r x_t )@>x_s\tx\widetilde{H}_{r,t}>> x_sx_{rt}@> \widetilde{H}_{s,rt}>>x_{srt} \\
@|                       @.                        @VVk(s,r,t)V \\
(x_s x_r) x_t @>\widetilde{H}_{s,r}\tx x_t >> x_{sr} x_{t} @>
\widetilde{H}_{sr,t}>> x_{rst}.
\end{CD}
\end{equation}
By the relation \eqref{mt}, this diagram implies
$$\theta_{x_s}(\widetilde{H}_{r,t})+\widetilde{H}_{s,rt}+k(s,r,t)=\widetilde{H}_{s,r}+\widetilde{H}_{sr,t}.$$
We write $k=\delta (\widetilde{H})$ even though the function
$\widetilde{H}$ takes values in  $B$. The cohomology class
$$\Obs(\zeta)=[\zeta_\ast k]\in H^3 (\mathrm{Coker} d, A)$$ is
called  {\it the obstruction} of the abstract $\zeta$-kernel.

The onto homomorphism $\zeta:\Ker d\ri A$ induces a quotient
category $\mathbb G/\Ker\zeta$ with the same objects of $\mathbb G
 (=D),$ but morphisms are homotopy classes of morphisms in $\mathbb
G$, i.e.,  elements of the group $\overline{B}=B/\Ker\zeta$. The
category $\mathbb G/\Ker\zeta$ is just the categorical group
associated to the crossed module  $(\overline{B}, \overline{d},
\overline{\theta}, D)$ induced by the crossed module $(B, d, \theta,
D)$.

\begin{lem}\label{lema1}  If the obstruction  $\Obs(\zeta)$ vanishes in $H^3(\Coker d, A)$,
there exists a monoidal fuctor $\Red(D,A,0)\ri \mathbb G/\Ker\zeta$.
 \end{lem}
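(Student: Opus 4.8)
The plan is to reduce the statement to the realizability criterion of Proposition~\ref{tk}(iii), after first identifying the homotopy invariants of the quotient. Write $\overline{\mathbb G}=\mathbb G/\Ker\zeta$, the strict categorical group associated to the induced crossed module $(\overline B,D,\overline d,\overline\theta)$. Since $\Ker\overline d=\Ker d/\Ker\zeta$ and $\zeta$ is onto, $\zeta$ factors through an isomorphism $\overline\zeta:\Ker\overline d\ri A$; moreover \eqref{ld1} gives $\zeta(\theta_x c)=\zeta(c)$, so $\overline\theta_x$ acts as the identity on $\Ker\overline d$ and $\overline\zeta$ carries the $\Coker d$-module $\Ker\overline d$ onto the trivial module $A$. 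Hence $\pi_0\overline{\mathbb G}=\Coker d$ and $\pi_1\overline{\mathbb G}\cong A$ with trivial action, so a reduction of $\overline{\mathbb G}$ has the form $\Red(\Coker d,A,\overline k)$.

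Next I would pin down the class $[\overline k]$. The canonical projection $q:\mathbb G\ri\overline{\mathbb G}$ is a strict monoidal functor, the identity on objects, and on $\pi_1$ it is exactly $\zeta$ under the identification $\overline\zeta$. By Proposition~\ref{tk}(i) it induces a functor $\mathbb G(k)\ri\Red(\Coker d,A,\overline k)$ of type $(\mathrm{id}_{\Coker d},\zeta)$, and this induced functor is realizable because $q$ itself is monoidal; hence by Proposition~\ref{tk}(iii) its obstruction \eqref{ctr}, namely $\overline k-\zeta_\ast k$, is a coboundary. Therefore $[\overline k]=[\zeta_\ast k]=\Obs(\zeta)$ in $H^3(\Coker d,A)$.

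With this in hand the conclusion is short. Consider the pair of homomorphisms $(\varphi,f)=(\pi,\mathrm{id}_A)$, where $\pi:D\ri\Coker d$ is the quotient map; the compatibility $f(xa)=\varphi(x)f(a)$ holds trivially since both $D$ and $\Coker d$ act trivially on $A$. The associated functor of type $(\varphi,f)$ from $\Red(D,A,0)$ to $\Red(\Coker d,A,\overline k)$ has obstruction $\xi=\varphi^\ast\overline k-f_\ast 0=\pi^\ast\overline k$. If $\Obs(\zeta)=0$ then $[\overline k]=0$, so $\overline k$ is a coboundary, whence $\pi^\ast\overline k$ is a coboundary and $[\xi]=0$. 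By Proposition~\ref{tk}(iii) the functor is then realizable, giving a monoidal functor $\Red(D,A,0)\ri\Red(\Coker d,A,\overline k)$, which I would compose with the monoidal equivalence $\Red(\Coker d,A,\overline k)\simeq\overline{\mathbb G}$ to obtain the required monoidal functor $\Red(D,A,0)\ri\mathbb G/\Ker\zeta$.

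The main obstacle is the second step: correctly computing the homotopy invariants of $\mathbb G/\Ker\zeta$ and, in particular, verifying that its associativity class equals $\Obs(\zeta)=[\zeta_\ast k]$; the trivial-action computation for $\Ker\overline d$ and the functoriality of reduction under $q$ are what make this precise. Once the reduction of $\mathbb G/\Ker\zeta$ is identified as $\Red(\Coker d,A,\zeta_\ast k)$, the existence statement follows at once from the vanishing-obstruction criterion, because pulling an associativity cocycle back along $\pi$ preserves coboundaries.
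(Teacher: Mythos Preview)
Your proposal is correct and follows essentially the same route as the paper: identify a reduction of $\mathbb G/\Ker\zeta$ as $\Red(\Coker d,A,\zeta_\ast k)$, then realize the functor of type $(\pi,\mathrm{id}_A)$ from $\Red(D,A,0)$ by checking that its obstruction $\pi^\ast(\zeta_\ast k)$ is a coboundary. The paper's proof is terser---it writes the target directly as $\Red(\Coker d,A,\delta g)$ with $\zeta_\ast k=\delta g$ and takes the identification with the reduction of $\mathbb G/\Ker\zeta$ for granted---whereas you supply that identification explicitly via the strict projection $q:\mathbb G\ri\overline{\mathbb G}$ and Proposition~\ref{tk}, which is a welcome clarification.
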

\begin{proof} If $\mathrm{Obs}(\zeta)$ vanishes in $H^3(\mathrm{Coker}d, A)$, then
 $\zeta_\ast k=\delta g$, where  $g:(\mathrm{Coker}d)^2\ri A$. Consider a functor
$$F: \Red(D,A,0)\ri \Red(\Coker d, A, \delta g),$$
for $F=(q,id)$, where $q$ is the natural projection. The obstruction
of $F$ is
$$q^\ast(\delta g)=\delta(q^\ast g).$$
Thus, $F$ together with $\widetilde{F}=q^\ast g$ is a monoidal
functor. It follows the existence of a monoidal functor from
$\Red(D,A,0)$ to $\mathbb G/\Ker\zeta$.
\end{proof}

\begin{lem}\label{lema2} Each monoidal functor $\Red(D,A,0) \ri \mathbb G/\Ker\zeta$ defines a $\zeta$-extension of co-type
 $B\stackrel{d}{\rightarrow}D$.
\end{lem}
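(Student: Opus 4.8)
The plan is to extract from the monoidal functor an explicit (nonabelian) factor set, use it together with the chosen stick to build the middle group $E$, and then verify the defining diagram \eqref{ct0}.

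First I would make the data of $(\Phi,\widetilde\Phi):\Red(D,A,0)\ri\mathbb G/\Ker\zeta$ explicit. Since $\mathbb G/\Ker\zeta$ is the strict categorical group associated with the crossed module $\overline B\stackrel{\overline d}{\ri}D$, the functor assigns to each $x\in D$ an object $\phi(x)\in D$, to each automorphism $(x,a)$ an automorphism of $\phi(x)$, i.e. an element of $\Ker\overline d$, and to each pair $(x,y)$ a structure morphism $\widetilde\Phi_{x,y}:\phi(x)\phi(y)\ri\phi(xy)$, i.e. an element $\overline g(x,y)\in\overline B$. Because both categorical groups have strict associativity constraints, the coherence axiom for $\widetilde\Phi$ translates, through the tensor law \eqref{mt}, into a nonabelian $2$-cocycle identity for $\overline g$ with respect to the action $\overline\theta_{\phi(x)}$. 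I would record two further facts: $\Ker\overline d$ is identified with $A$ via the homomorphism induced by $\zeta$, and, by \eqref{ld1}, $A$ is a trivial $\Coker d$-module, hence a trivial $D$-module; moreover naturality forces the action of $\Phi$ on automorphisms to be the identity of $A$ under this identification.

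Next I would construct $E$. Lifting $\overline g$ along $B\twoheadrightarrow\overline B$ to a function $g:D\times D\ri B$ and combining it with the stick $(x_s,b_x)$ of \eqref{qs}, I would produce a normalized $2$-cochain $f:D\times D\ri A$ and a function $\psi:B\ri A$ that restricts to $\zeta$ on $\Ker d$. I then set $E=A\times D$ as a set, with the twisted multiplication
\[
(a,x)(a',x')=(a+a'+f(x,x'),\,xx'),
\]
and define
\[
j(a)=(a,1),\qquad p(a,x)=x,\qquad \beta(b)=(\psi(b),d(b)).
\]
For $c\in\Ker d$ one then has $\beta(ic)=(\psi(c),1)=(\zeta(c),1)=j(\zeta c)$, which is exactly relation \eqref{ld}; hence the abstract kernel induced by this diagram is the prescribed $\zeta$, and this is what makes $E$ a $\zeta$-extension rather than an extension of some twisted kernel.

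The remaining checks are, in principle, mechanical. The triviality of the $D$-action makes $j(A)$ central and yields the exact bottom row with $\Ker p=j(A)$; the equality $p\beta=d$ is immediate and gives $H_1$; and $H_2$ follows by computing the conjugation $\theta'_x\beta(b)$ of $\beta(b)$ by a lift of $x$ and comparing it with $\beta(\theta_x b)$, using the crossed module relation $C_2$ and the triviality of the action on $A$. The step I expect to be the genuine obstacle is showing that $f$ is a true $2$-cocycle on all of $D$—equivalently, that the multiplication on $E$ is associative and that $\beta$ is multiplicative through the identity $\psi(b+b')-\psi(b)-\psi(b')=f(d(b),d(b'))$. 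Here the coherence identity for $\widetilde\Phi$ must be fused with the stick relations and the axioms $C_1,C_2$: the obstruction to $f$ closing is exactly $\zeta_\ast k-\delta g$, so $\delta f=0$ holds precisely because the functor $\Phi$ exists, i.e. because $\Obs(\zeta)$ vanishes (consistently with Lemma \ref{lema1}). Carrying out this cancellation carefully is the technical heart of the proof.
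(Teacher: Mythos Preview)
Your route is genuinely different from the paper's, and the difference is instructive. The paper does \emph{not} build the middle group as a central extension $A\times_f D$; it builds it as the (nonabelian) crossed product
\[
E_g=\overline{B}\times \Coker d,\qquad (\overline b,s)+(\overline c,r)=(\overline b+\varphi_s(\overline c)+g(s,r),\,sr),
\]
with $\varphi_s=\overline\theta_{x_s}$, and then defines $j_g(\zeta c)=(\overline c,1)$, $p_g(\overline b,s)=d(b)\,x_s$, and $\beta(b)=(\overline b,1)$. The advantages of this choice are exactly the points where your sketch becomes vague: no lift of $\overline g$ back to $B$ is needed, no auxiliary function $\psi:B\to A$ has to be manufactured, and the crossed–module compatibility $H_2$ reduces to the one–line computation $\mu_{(\overline b,s)}(\overline c,1)=(\mu_{\overline b}\varphi_s(\overline c),1)$, already available from the multiplication law. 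In short, by keeping $\overline B$ as a factor the paper lets $\beta$ be the obvious map $b\mapsto(\overline b,1)$, whereas your parametrisation forces you to encode $\beta$ through a nontrivial $\psi$ satisfying $\psi(b+b')-\psi(b)-\psi(b')=f(db,db')$ and $\psi(\theta_x b)=\psi(b)+\text{(cocycle terms)}$, identities you acknowledge but never establish.

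Two concrete gaps in your plan deserve attention. First, ``lift $\overline g$ to $g:D^2\to B$ and combine with the stick to produce $f$ and $\psi$'' is not a construction; you never say what $f$ and $\psi$ actually are, and the associativity of your $E$ hinges entirely on $\delta f=0$, which you defer. Second, your obstruction bookkeeping mixes cochains on different groups: $k$ and hence $\zeta_\ast k$ live on $\Coker d$, while your $f$ and $g$ live on $D$; the correct comparison is with the inflation $q^\ast(\zeta_\ast k)$, and the ``$\delta g$'' you write is not even well typed since $g$ takes values in the nonabelian group $B$. Finally, the claim that ``naturality forces the action of $\Phi$ on automorphisms to be the identity of $A$'' is false for an arbitrary monoidal functor (take the trivial functor, or compose with any endomorphism of $A$); it is a restriction on the \emph{type} of $\Phi$, not a consequence of naturality. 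None of this is fatal---your $A\times_f D$ is, after all, just a re-coordinatisation of the paper's $E_g$ via a section $D\to E_g$---but the paper's construction is both shorter and avoids every one of these difficulties.
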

\begin{proof}
Construction of the crossed product from a monoidal
functor $(\Gamma, \widetilde{\Gamma}): \Red(D,A,0) \ri \mathbb
G/\Ker\zeta$.

The morphism $\widetilde{\Gamma}$ defines an associated function
$g:D^2\ri A$ by $\widetilde{\Gamma}_{s,r}=(1,g(s,r))$. Now, we set
  $\varphi: \Coker d \rightarrow \Aut \overline{B}$ by
\begin{align} \varphi_s(\overline{b})= \overline{\theta} _{x_s}\overline{b}(=\overline{\theta _{x_s}(b)}).\label{eq7'}
 \end{align}

Since $x_rx_s=\overline{d}(\widetilde{\Gamma}_{r,s})x_{rs}$, the
functions $\varphi, g$ satisfy the rule
\begin{align*} \varphi_s \varphi_r = \mu_{g(s,r)}\varphi_{sr}. \label{eq8'} \end{align*}
Since $\delta g=0$, according to Lemma 8.1 \cite {MacL63} one can
defines a crossed product $E_g= [\overline{B},g,\varphi,\Coker d]$.
Namely, $E_g= \overline{B}\times \Coker d$ and the operation on
$E_g$ is
\begin{equation}\label{tic}
(\overline{b},s)+(\overline{c},r)=
(\overline{b}+\varphi_s(\overline{c})+g(s,r), sr). \end{equation} In
this group $(0,1)$ is the zero, while the negative of the element
 $(\overline{b},s)$ is $ (\overline{b'},s^{-1})$, where $
\varphi_s(\overline{b'}) =-\overline{b}-g(s,s^{-1}).$
One obtains an exact sequence
\begin{align*} \mathcal E_g:0\rightarrow \ A \xrightarrow{j_g} E_g \xrightarrow{p_g}D \rightarrow 1,   \end{align*}
where
$ j_g(\zeta(c))=(\overline{c},1),\; p_g(\overline{b},s) = db.x_s.$ 
Indeed,
$$p_g j_g(\zeta(c))=p_g(\overline{c},1)=dc.x_s=1,$$
and for $(\overline{b},s)\in$ Ker$(p_g)$, then
$p_g(\overline{b},s)=db.x_s=1$. By the uniqueness of the
representation in  $D$, we have $db=1$ and $x_s=1$, it follows that
$b\in$ Ker$d$ and $s=1$, or $(\overline{b},s)\in$ Im$(j_g)$.

We prove that  $j_g(A)\subset Z(E_g)$. For $b,c\in B$,
one has
\begin{equation}\label{ht}
\mu_{(\overline{b},s)}(\overline{c},1)=({\mu_{\overline b}\varphi_s(\overline c)},1)
\end{equation}
If $c\in \Ker d $, then by \eqref{ld1}, ${\varphi_s(\overline
c)}=\overline{c}$. Hence,
$$\mu_{(\overline{b},s)}(\overline{c},1)=(\mu_{\overline
b}(\overline c),1)=(\overline {b+c-b},1)=(\overline {c},1).$$

 Since $j_g(A)\subset Z(E_g)$ and $p_g$ is a surjection,
$E_g\stackrel{p_g}{\rightarrow}D$ is a crossed module in which the
homomorphism $\theta':D\ri \Aut E_g$ is the conjugation. To define
the morphism $(\beta,id_D)$ of crossed modules, one set
  $$\beta: B\ri E_g,\;\ \beta(b)=(\overline{b},1).$$
This correspondence is a homomorphism thanks to the relation
\eqref{tic}.
Clearly, $p_g\circ \beta=d$. Moreover, for all $c\in B$ and
$x=db.x_s\in D$, we have
$$\beta(\theta_x(c))=\beta(\theta_{db}(\theta_{x_s}(c)))=(\overline{\mu_b\theta_{x_s}(c)},1)\stackrel{\eqref{eq7'}}{=}(\mu_{\overline{b}}\varphi_s(\overline{c}),1).$$
Since $\theta'_x=\mu_{(\overline{b},s)}$,
\begin{align*}
\theta'_x\beta(c)&=\mu_{(\overline{b},s)}(\overline{c},1)\stackrel{\eqref{ht}}{=}(\mu_{\overline{b}}\varphi_s(\overline{c}),1)
\end{align*}
Thus, the relation $H_2$ holds, and $\mathcal E_g$ is a
$\zeta$-extension of co-type $B\stackrel{d}{\rightarrow}D$.
\end{proof}
We state one of the paper's main results.
\begin{thm}\label{dlc1} Let  $\zeta: \mathrm{Ker} d\rightarrow A$ be the abstract
$\zeta$-kernel of the crossed module $B\stackrel{d}{\rightarrow}D$.
Then, the vanishing of the obstruction  $\Obs(\zeta)$
   in $H^3(\Coker d,A)$ is  necessary and sufficient for there to exist a $\zeta$-extension
   of co-type $B\stackrel{d}{\rightarrow}D$.

\end{thm}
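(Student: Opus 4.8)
The plan is to treat the statement as a biconditional and to route both implications through the obstruction theory of Proposition \ref{tk}. \emph{Sufficiency} requires no new work: if $\Obs(\zeta)=[\zeta_\ast k]$ vanishes in $H^3(\Coker d,A)$, then Lemma \ref{lema1} produces a monoidal functor $\Red(D,A,0)\ri\mathbb G/\Ker\zeta$, and Lemma \ref{lema2} turns any such functor into a $\zeta$-extension of co-type $B\stackrel{d}{\rightarrow}D$; chaining the two lemmas gives the ``if'' direction.

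For \emph{necessity} I would start from an arbitrary $\zeta$-extension and manufacture, in the opposite direction, a monoidal functor out of $\mathbb G/\Ker\zeta$ whose realizability encodes $\Obs(\zeta)=0$. The first step is to observe that $\beta$ annihilates $\Ker\zeta$: for $c\in\Ker\zeta\subset\Ker d$ one has $\beta(ic)=j(\zeta c)=0$ by \eqref{ld}. Since $\Ker\zeta$ is central in $B$ by Proposition \ref{md2}, $\beta$ descends to a homomorphism $\overline\beta:\overline B=B/\Ker\zeta\ri E$, and I would check that $(\overline\beta,id_D)$ is a morphism of crossed modules $(\overline B,D,\overline d,\overline\theta)\ri(E,D,p,\theta')$, with $H_1$ coming from $p\beta=d$ and $H_2$ from the relation already satisfied by $(\beta,id_D)$. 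By Proposition \ref{t1} this yields a strict monoidal functor $\overline\Phi:\mathbb G/\Ker\zeta\ri\mathbb G_{E\ri D}$.

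Next I would pass to reductions and apply the obstruction formula \eqref{ctr}. Because $p$ is onto, $\mathbb G_{E\ri D}$ has $\pi_0=\Coker p=1$ and $\pi_1=\Ker p\cong A$, so its reduction is $\Red(1,A,0)$, while rerunning the stick construction \eqref{qs} through $B\ri\overline B$ shows that $\mathbb G/\Ker\zeta$ reduces to $\Red(\Coker d,A,\zeta_\ast k)$, whose associativity class is precisely $\Obs(\zeta)$. By Proposition \ref{tk}, $\overline\Phi$ induces a functor of type $(\psi,h)$ between these reductions, with $\psi:\Coker d\ri 1$ and $h:A\ri A$ induced by $\overline\beta$ on $\pi_1$. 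Under the identifications $\Ker\overline d=\Ker d/\Ker\zeta\cong A$ via $\overline c\mapsto\zeta c$ and $\Ker p\cong A$ via $j$, the map $h$ sends $\zeta c\mapsto\beta(ic)=j(\zeta c)\mapsto\zeta c$, so $h=id_A$. Since $\overline\Phi$ is itself a monoidal functor it is realizable, so Proposition \ref{tk}(iii) forces its obstruction $\psi^\ast(0)-h_\ast(\zeta_\ast k)=-\zeta_\ast k$ to be a coboundary in $Z^3(\Coker d,A)$, that is $\Obs(\zeta)=0$.

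The delicate step, which I would verify most carefully, is this last matching: that the reduction of $\mathbb G/\Ker\zeta$ carries exactly the class $\zeta_\ast k$ and that the induced map on $\pi_1$ is $id_A$. It is precisely this that lands the vanishing in the correct group $H^3(\Coker d,A)$ rather than in $H^3(D,A)$ --- the latter being all one could extract from the cruder route that feeds the $\zeta$-extension back through Lemma \ref{lema2} to get a functor $\Red(D,A,0)\ri\mathbb G/\Ker\zeta$, whose obstruction lives over $D$ and would need an additional injectivity of $q^\ast$ to be useful.
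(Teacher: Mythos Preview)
Your proof is correct and follows essentially the same strategy as the paper: sufficiency via Lemmas \ref{lema1} and \ref{lema2}, necessity by converting the crossed-module morphism into a monoidal functor via Proposition \ref{t1} and then reading off the vanishing of $[\zeta_\ast k]$ from Proposition \ref{tk}. The only cosmetic difference is that the paper applies Proposition \ref{t1} directly to $(\beta,id_D):(B,D,d,\theta)\ri(E,D,p,\theta')$, obtaining on reductions a functor of type $(0,\zeta)$ from $\Red(\Coker d,\Ker d,k)$ to $\Red(1,A,0)$, whereas you first descend $\beta$ to $\overline\beta$ on $\overline B=B/\Ker\zeta$ and work with $\mathbb G/\Ker\zeta$ --- same obstruction computation, one step later.
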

\begin{proof}
{\it Necessary condition}. Let $\mathcal E$ be a $\zeta$-extension
of co-type $B\ri D$ satisfying the diagram \eqref{ct0}. Then, the
reduced categorical group of the categorical group $\mathbb G'$
associated to the crossed module
 $E\stackrel{p}{\rightarrow} D$ is $\Red(1,A,0)$.  By Proposition \ref{t1}, the pair $(\beta,id_D)$
 determines a monoidal functor $(F, \widetilde{F}): \mathbb G\ri \mathbb G'$.
 By Proposition \ref{tk}, $(F, \widetilde{F})$ induces a  monoidal functor of type $(0,\zeta)$ from
  $\Red(\mathrm{Coker}d, \mathrm{Ker}d,k)$ to $\Red(1,A,0)$.
Also by Proposition \ref{tk}, the obstruction $[\zeta_\ast k]$ of
the pair $(0,\zeta)$ vanishes in $H^3(\Coker d, A)$.

{\it Sufficient condition}. It follows directly from Lemma
\ref{lema1} and Lemma \ref{lema2}.
\end {proof}

\section {Classification theorem}
\begin{dn} \emph{ Two $\zeta$-extensions of co-type
 $(B,D,d,\theta)$,
\begin{align*}0\rightarrow A\xrightarrow{j}E\xrightarrow{p} D\rightarrow 1,  \;\ \  B\stackrel{\beta}{\rightarrow}E  \end{align*}
\vspace{-30pt}
\begin{align*}0\rightarrow A\xrightarrow{j'}E'\xrightarrow{p'} D\rightarrow 1, \;\ \ B\stackrel{\beta'}{\rightarrow}E'   \end{align*}
are {\it equivalent} if there is an isomorphism $\omega:E\ri E'$
such that $\omega$$j=j'$,  $p'\omega=p$ and
$\omega\beta=\beta'$.}\end{dn}

We denote by
   $$\mathrm{Ext}_{B\ri D}(D,A,\zeta)$$
   the set of all equivalence classes of $\zeta$-extensions of co-type
 $B\ri D$ inducing $\zeta$.
We describe this set by means of the set
$$\mathrm{Hom}_{(0,\zeta)}[\Red(D,A,0),{\mathbb G}/\Ker\zeta]$$
of homotopy classes of monoidal functors of type $(0,\zeta)$ from
$\Red(D,A,0)$ to ${\mathbb G}/\Ker\zeta$.
First,let $q:B\ri\overline{B}=B/\Ker\zeta$, and $\sigma:D\ri \Coker
d$ be the natural projections, one states the following lemma.
\begin{lem}\label{bd01}
If $\zeta$ is surjective, then the commutative diagram
\eqref{ct0} induces a short exact sequence
 \begin{align}0\rightarrow \ \overline{B} \xrightarrow{\varepsilon} E\xrightarrow{\sigma p} \Coker d
  \rightarrow 1, \label{eq01'}   \end{align}
where $\varepsilon(b+\Ker  \zeta )=\beta(b) $.
\end{lem}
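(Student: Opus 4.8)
The plan is to verify directly the four conditions making the sequence $0\ri \overline{B}\xrightarrow{\varepsilon}E\xrightarrow{\sigma p}\Coker d\ri 1$ exact: that $\varepsilon$ is well defined, that it is injective, that $\sigma p$ is onto, and that $\mathrm{Im}\,\varepsilon=\Ker(\sigma p)$. Throughout I would use the two structural facts carried by the diagram \eqref{ct0}: the commutativity relation $j(\zeta c)=\beta(ic)$ from \eqref{ld}, and the crossed-module morphism condition $p\beta=d$ coming from $(\beta,id_D)$ being a morphism of crossed modules. I would also invoke $\Ker\zeta\subset\Ker d\subset Z(B)$ (Proposition \ref{md2}(i)), which makes $\Ker\zeta$ normal in $B$ so that $\overline{B}=B/\Ker\zeta$ is a legitimate quotient.

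First I would check well-definedness and injectivity of $\varepsilon$, both of which rest on the observation that $\beta$ annihilates $\Ker\zeta$: for $c\in\Ker\zeta\subset\Ker d$ one has $\beta(c)=\beta(ic)=j(\zeta c)=j(0)=1$, so the value $\varepsilon(b+\Ker\zeta)=\beta(b)$ is independent of the representative. For injectivity, suppose $\beta(b)=1$; applying $p$ and using $p\beta=d$ gives $d(b)=1$, hence $b\in\Ker d$, and then $j(\zeta b)=\beta(ib)=\beta(b)=1$, so injectivity of $j$ forces $\zeta(b)=0$, i.e. $b\in\Ker\zeta$. Surjectivity of $\sigma p$ is immediate, since $p$ is onto (bottom row exact) and $\sigma$ is the natural projection.

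The substance of the lemma is exactness at $E$, i.e. $\mathrm{Im}\,\varepsilon=\beta(B)=\Ker(\sigma p)$. Since $\Ker\sigma=\mathrm{Im}\,d$, I would first rewrite $\Ker(\sigma p)=p^{-1}(d(B))$; the inclusion $\beta(B)\subset p^{-1}(d(B))$ then follows at once from $p\beta=d$. For the reverse inclusion, take $e\in E$ with $p(e)=d(b_0)$ for some $b_0\in B$; then $p\bigl(e\,\beta(b_0)^{-1}\bigr)=1$, so exactness of the bottom row yields $e\,\beta(b_0)^{-1}=j(a)$ for some $a\in A$. This is exactly where the surjectivity of $\zeta$ is used: choose $c\in\Ker d$ with $\zeta(c)=a$, so that $j(a)=j(\zeta c)=\beta(ic)=\beta(c)$, and hence $e=\beta(c)\beta(b_0)=\beta(c+b_0)\in\beta(B)$.

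The main obstacle is precisely this last inclusion $\Ker(\sigma p)\subset\mathrm{Im}\,\varepsilon$: it is the only step that genuinely needs the hypothesis that $\zeta$ is onto, via the realization $j(a)=\beta(c)$ of an arbitrary element of $j(A)$ inside $\beta(B)$. The remaining verifications are formal diagram chases; the one bookkeeping point to watch is the mixed notation (addition in $B$ and $A$, multiplication in $E$ and $D$), together with $j(A)\subset Z(E)$, which lets me move the central factor $j(a)$ past $\beta(b_0)$ without worrying about the order of multiplication.
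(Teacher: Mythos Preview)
Your proof is correct and follows essentially the same route as the paper's: both arguments identify $\Ker\beta=\Ker\zeta$ to get well-definedness and injectivity of $\varepsilon$, note that $\sigma p$ is trivially onto, and prove exactness at $E$ by writing any $e\in\Ker(\sigma p)$ as $\beta(b_0)$ times some $j(a)$ and then using surjectivity of $\zeta$ to realize $j(a)=\beta(c)$. Your write-up is simply more detailed (explicitly checking well-definedness, invoking $\Ker\zeta\subset Z(B)$ for normality, and noting $j(A)\subset Z(E)$ for the commutation), but there is no genuine difference in strategy.
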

\begin{proof}
Obviously, $\sigma p$ is  surjective. It is easy to see that
$\Ker\beta=\Ker\zeta$, so $\varepsilon$ is injective. The diagram
\eqref{ct0} implies $\sigma p \varepsilon(\overline{b})=\sigma p
\beta({b})=\sigma d(b)=1$, this means that the above sequence is
semi-exact. For $e\in$ Ker$(\sigma p)$, $p(e)\in$ Ker$\sigma=$
Im$d$, and hence $p(e)=d(b)=p\beta(b)=p\varepsilon(\overline{b})$.
Then,
 $e= \varepsilon (\overline{b})+ja$. Since $ja=j\zeta(c)=\beta(c)=\varepsilon(\overline{c})$,
$e=\varepsilon(\overline{b+c})\in  \mathrm{Im}\varepsilon$. Thus,
the sequence (\ref{eq01'}) is exact.
\end{proof}

 \begin{lem}\label{14'}
Each $\zeta$-extension of co-type  $B\ri D$ is equivalent to a
crossed product extension which is constructed from a monoidal
functor of type $(0,\zeta)$,
$(\Gamma,\widetilde{\Gamma}):\Red(D,A,0)\ri {\mathbb G}/\Ker\zeta$.
\end{lem}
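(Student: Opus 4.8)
The plan is to reverse the crossed-product construction of Lemma~\ref{lema2}: from an arbitrary $\zeta$-extension I will restrict to the genuine group extension of $\overline B$ by $\Coker d$ furnished by Lemma~\ref{bd01}, apply classical Schreier theory to write $E$ as a crossed product, and then recognise that crossed product as one of the $\mathcal E_g$. Fixing the stick $(x_s,b_x)$ used to present $\mathbb G(k)$, I start from a $\zeta$-extension
\[0\ri A\stackrel{j}{\ri}E\stackrel{p}{\ri}D\ri 1,\qquad B\stackrel{\beta}{\ri}E,\]
and invoke Lemma~\ref{bd01} to obtain the exact sequence $0\ri\overline B\stackrel{\varepsilon}{\ri}E\stackrel{\sigma p}{\ri}\Coker d\ri1$. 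I then choose lifts $u_s\in E$ with $p(u_s)=x_s$ (hence $\sigma p(u_s)=s$) and $u_1=0$.

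By the classical Schreier theorem (Lemma~8.1 of \cite{MacL63}, as used in Lemma~\ref{lema2}), the extension $0\ri\overline B\ri E\ri\Coker d\ri1$ together with these lifts presents $E$ as a crossed product $[\overline B,\gamma,\varphi,\Coker d]$, where $\varphi_s$ is conjugation by $u_s$ on $\overline B$ and $\gamma(s,r)=\varepsilon^{-1}(u_s+u_r-u_{sr})$; the crossed-product axioms hold automatically, since they merely re-encode the group axioms of $E$. The point is then to identify these data with those of Lemma~\ref{lema2}. First, relation $H_2$ for the morphism $(\beta,id_D)$ together with \eqref{eq7'} gives $u_s+\varepsilon(\overline c)-u_s=\varepsilon(\overline\theta_{x_s}\overline c)=\varepsilon(\varphi_s\overline c)$, so the conjugation action is exactly the $\varphi$ of \eqref{eq7'} (well defined independently of the lift since $j(A)\subset Z(E)$). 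Second, from $p\beta=d$ one computes $p(u_s+u_r-u_{sr})=x_sx_rx_{sr}^{-1}=d(b_{x_sx_r})^{-1}$, so $\gamma(s,r)$ has the same $\overline d$-image as the stick morphism $-\overline{b_{x_sx_r}}$; hence $\gamma(s,r)+\overline{b_{x_sx_r}}\in\Ker\overline d$, and under the isomorphism $\Ker\overline d\cong A$ induced by $\zeta$ this produces the $A$-valued function $g(s,r)$ with $\widetilde\Gamma_{s,r}=(1,g(s,r))$.

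This exhibits $E$ as precisely the crossed product $E_g$ of Lemma~\ref{lema2} attached to a monoidal functor $(\Gamma,\widetilde\Gamma):\Red(D,A,0)\ri\mathbb G/\Ker\zeta$ of type $(0,\zeta)$; the coherence identity for $\widetilde\Gamma$ (equivalently the cocycle identity for $g$ read off from \eqref{qs}) is forced by associativity of addition in $E$, which is exactly what makes $(\Gamma,\widetilde\Gamma)$ a monoidal functor. Writing each $e\in E$ uniquely as $e=\varepsilon(\overline b)+u_s$ with $s=\sigma p(e)$, I define $\omega:E\ri E_g$ by $\omega(e)=(\overline b,s)$; it is a bijection by uniqueness, a homomorphism by the two identifications above matching the twisted operation \eqref{tic}, and it satisfies $\omega j=j_g$ (since $j(\zeta c)=\beta(c)=\varepsilon(\overline c)$ by \eqref{ld}), $p_g\omega=p$ and $\omega\beta=\beta_g$. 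Hence $\omega$ is an equivalence of $\zeta$-extensions.

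The main obstacle is the double matching in the second paragraph: one must verify simultaneously that conjugation by the lifts reproduces the action $\varphi_s=\overline\theta_{x_s}$ and that the Schreier defect $u_s+u_r-u_{sr}$ reproduces the twisting term of \eqref{tic}, because only then does the abstractly-available crossed-product isomorphism become an equivalence of $\zeta$-extensions, i.e.\ also respects $\beta$ and $j$. Once these are in place, the remaining verifications --- well-definedness of $\gamma$, the cocycle identity, and compatibility with $j,p,\beta$ --- are routine.
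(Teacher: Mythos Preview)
Your argument is correct and follows the same route as the paper's own proof: both invoke Lemma~\ref{bd01} to pass to the short exact sequence $0\to\overline{B}\to E\to\Coker d\to 1$, choose a transversal $\{e_s\}$ (your $u_s$) of $\Coker d$ in $E$, and read off the crossed-product data to exhibit the isomorphism $\omega:\varepsilon(\overline{b})+e_s\mapsto(\overline{b},s)$ onto an $E_g$ as in Lemma~\ref{lema2}. The only difference is one of presentation: the paper simply cites ``the proof of Theorem~\ref{dlc1}'' for the existence of $(\Gamma,\widetilde\Gamma)$ and declares $\omega$ to be an isomorphism, whereas you spell out explicitly that the Schreier action is $\varphi_s=\overline\theta_{x_s}$ (via $H_2$ and the choice $p(u_s)=x_s$) and that the Schreier factor set $\gamma(s,r)=\varepsilon^{-1}(u_s+u_r-u_{sr})$ furnishes the monoidal datum~$\widetilde\Gamma$.

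One small caution: the cocycle that must appear in the crossed-product law \eqref{tic} so that $\omega$ is a homomorphism is the $\overline{B}$-valued $\gamma(s,r)$ itself (this is what the paper, somewhat loosely, also denotes by $g$ and identifies with $\widetilde\Gamma_{s,r}$, cf.\ the line ``$x_rx_s=\overline d(\widetilde\Gamma_{r,s})x_{rs}$'' in Lemma~\ref{lema2}). Your $A$-valued correction $g(s,r)=\gamma(s,r)+\overline{b_{x_sx_r}}$ is the right object at the level of the \emph{reduction} $\Red(\Coker d,A,\zeta_\ast k)$, but if you feed that $g$ rather than $\gamma$ into \eqref{tic} the map $\omega$ as written is off by the stick term $\overline{b_{x_sx_r}}$. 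The fix is immediate---take $\widetilde\Gamma_{s,r}=\gamma(s,r)$ as the morphism in $\mathbb G/\Ker\zeta$ (your computation of $\overline d(\gamma)$ already shows this is a bona fide morphism $x_sx_r\to x_{sr}$)---and then everything you wrote goes through verbatim; this is exactly how the paper is (implicitly) arguing.
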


\begin{proof}
Let $E$ be a $\zeta$-extension of co-type
$B\stackrel{d}{\rightarrow}D$.
 By the proof of Theorem \ref{dlc1},
there is a monoidal functor
$(\Gamma,\widetilde{\Gamma}):\Red(D,A,0)\ri {\mathbb G}/\Ker\zeta$.
  By Lemma \ref{lema2},
  the crossed product
 $E_g$, where $g$ is the function associated with $\widetilde{\Gamma}$, is a
$\zeta$-extension of co-type $B\stackrel{d}{\rightarrow}D$ in which
$$\beta_g:B\ri E_g,\ b\mapsto (\overline{b},1).$$

Thanks to the exact sequence \eqref{eq01'} in Lemma \ref{bd01}, each
element of $E$ can be represented uniquely as
$\varepsilon{\overline{b}}+e_s$,
 where $\left\{e_s, s\in \Coker d\right\}$
is a set of representatives of $\Coker d$ in $E$. It is easy to
check that the correspondence
$$\omega:E\ri E_g,\;\ \varepsilon \overline{b}+e_s\mapsto (\overline{b},s)$$
is a group isomorphism. Moreover, $\omega$ makes two extensions $\mathcal
E$ and $\mathcal E_g$  equivalent.
\end{proof}

\begin{thm}[Schreier theory \label{pl} for extensions of co-type of a crossed module]

If $\zeta$-extensions of co-type $B\stackrel{d}{\rightarrow}D$ exist,
then there is a bijection
$$\Omega: \mathrm{Ext}_{B\ri D}(D,A,\zeta)\rightarrow\mathrm{Hom}_{(0,\zeta)}[\Red(D,A,0),\mathbb G/ \Ker\zeta].$$
\end{thm}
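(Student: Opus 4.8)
The final statement is Theorem \ref{pl} (Schreier theory), claiming a bijection between:
- equivalence classes of ζ-extensions: $\mathrm{Ext}_{B\to D}(D,A,\zeta)$
- homotopy classes of monoidal functors of type $(0,\zeta)$: $\mathrm{Hom}_{(0,\zeta)}[\Red(D,A,0),\mathbb{G}/\Ker\zeta]$

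**What tools are available:**
- Lemma \ref{lema2}: each monoidal functor gives a ζ-extension (the crossed product construction)
- Lemma \ref{14'}: each ζ-extension is equivalent to a crossed product from a monoidal functor
- Lemma \ref{bd01}: the short exact sequence for the representation
- Proposition \ref{tk}(iii): homotopy classes of type-$(\varphi,f)$ functors correspond to $H^2$

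**Strategy: Build the bijection explicitly using the crossed product construction.**

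Let me plan this out.
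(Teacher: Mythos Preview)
Your proposal is not a proof but only a list of ingredients followed by ``Let me plan this out.'' You have correctly identified the relevant lemmas (Lemma~\ref{lema2} for one direction, Lemma~\ref{14'} for the other), and this is indeed the paper's approach, but the substantive work is entirely missing.

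What still needs to be done is precisely the part that carries content: after defining $\Omega$ via Lemma~\ref{14'} as $[E]\mapsto[(\Gamma,\widetilde{\Gamma})]$, you must check that this is \emph{well-defined} and \emph{injective} on classes. Concretely, you need to show that if $(\Gamma,\widetilde{\Gamma})$ and $(\Gamma',\widetilde{\Gamma}')$ are homotopic via $\alpha$, then the associated crossed products $E_g$ and $E_{g'}$ are equivalent, and conversely. This requires unpacking the homotopy $\alpha_s=(1,a_s)$ to obtain $g-g'=\delta a$, lifting $a$ through the surjection $\zeta$ to a function $z:\Coker d\to\Ker d$, and then exhibiting the explicit isomorphism $\omega:(\overline{b},s)\mapsto(\overline{b+z_s},s)$ between the crossed products, verifying it respects $j,p,\beta$. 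The converse direction requires showing that any equivalence $\omega:E_g\to E_{g'}$ is forced (by $p_{g'}\omega=p_g$ and $\omega\beta_g=\beta_{g'}$) to have this form, hence yields a homotopy. None of this is automatic from the lemmas you cite; Lemmas~\ref{lema2} and~\ref{14'} only give maps on objects, not on equivalence classes. Surjectivity does follow immediately from Lemma~\ref{lema2}, as you suggest.
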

\begin{proof} The correspondence
$E \mapsto (\Gamma,\widetilde{\Gamma}) $ in Lemma \ref{14'} defines
a correspondence $[E]\mapsto [(\Gamma,\widetilde{\Gamma})]$.  The
fact that $\Omega$ is injective
 implies  by following steps.

{\it Step 1:  If  monoidal functors $(\Gamma,\widetilde{\Gamma})$
and $(\Gamma',\widetilde{\Gamma}')$ are homotopic, then two
extensions $\mathcal E_g$ and $\mathcal E_{g'}$ are equivalent.}

Let $\Gamma,\Gamma':\Red(D,A,0)\ri{\mathbb G}/\Ker\zeta$ be two
monoidal functors and
 $\alpha:\Gamma\ri \Gamma'$ be a homotopic. Then, the following
 diagram commutes
\[\begin{diagram}
\node{\Gamma s\Gamma r}\arrow{e,t}{\widetilde{\Gamma}}\arrow{s,l}{\alpha_s\otimes
\alpha_r}\node{\Gamma sr}\arrow{s,r}{\alpha_{sr}}\\
\node{\Gamma's\Gamma'r}\arrow{e,b}{\widetilde{\Gamma'}}\node{\Gamma'sr.}
\end{diagram}\]
Since the morphisms  $\alpha_s$ are of forms
  $ (1,a_s)$,  it follows from the
 above diagram that
\begin{equation}\label{Sch}g(s,r)-g'(s,r)=a_s+a_r-a_{sr}=(\delta a)(s,r).
\end{equation}
Since $\zeta$ is  surjective, $a_s=\zeta(z_s)$, where $z:\Coker
d\ri \Ker d$ is a normalized function.

 Then, by \eqref{Sch},
$\alpha$ determines a map $\omega:E_g\ri E_{g'}$ by
\begin{align}\label{htbs}
(\overline{b},s)&\mapsto[\overline{b+z_s},s].\end{align} By the
relation \eqref{ld1} and by the definition of operations in
$E_g,E_{g'}$, the map $\omega$ is a group homomorphism. Further, it
makes two extensions $\mathcal E$ and $\mathcal E_g$  equivalent.

{\it Step  2: If two extensions $\mathcal E_g$ and $\mathcal E_{g'}$
are equivalent, then $(\Gamma,\widetilde{\Gamma})$ and
$(\Gamma,\widetilde{\Gamma})$ are homotopic.}

Let $\mathcal E_g$ and $\mathcal E_{g'}$ be equivalent via the
isomorphism
 $\omega:E_g\ri E_{g'}$. From $p_g=p_{g'}\omega:E_g\ri E_g'\ri D,$ 
it follows that $\omega$ is of the form (\ref{htbs}), where
$z:\Coker d\ri \Ker d$ is a normalized function. Since  $\omega$ is
a homomorphism, $\alpha=\zeta_\ast k$ is a homotopy between $\Gamma$
and $\Gamma'$.

It follows from Lemma \ref{lema2} that $\Omega$ is surjective.
\end{proof}

It follows from Proposition \ref{tk} and Theorem \ref{pl} that
\begin{hq}\label{hq01}
If $\zeta$-extensions of co-type $B\stackrel{d}{\rightarrow}D$ exist,
then there is a bijection
$$\Ext_{B\ri D}(D,A,\zeta)\leftrightarrow H^{2}(\Coker d, A).$$
\end{hq}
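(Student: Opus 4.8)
The plan is to obtain the bijection by chaining the two results named in the statement: Theorem~\ref{pl} reduces the classification of $\zeta$-extensions to a set of homotopy classes of monoidal functors, and Proposition~\ref{tk}(iii) evaluates that set as a second cohomology group. Concretely, Theorem~\ref{pl} supplies
$$\Ext_{B\ri D}(D,A,\zeta)\leftrightarrow \mathrm{Hom}_{(0,\zeta)}[\Red(D,A,0),\mathbb G/\Ker\zeta],$$
so the whole task is to compute the right-hand set and recognize it as $H^2(\Coker d,A)$.

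First I would determine the reduction of the target categorical group. As recorded just before Lemma~\ref{lema1}, $\mathbb G/\Ker\zeta$ is the categorical group associated to the induced crossed module $\overline B\stackrel{\overline d}{\ri}D$; hence its three invariants are $\pi_0=\Coker\overline d=\Coker d$ (because $\mathrm{Im}\,\overline d=\mathrm{Im}\,d$), $\pi_1=\Ker\overline d\cong A$ (the surjectivity of $\zeta$ yielding $\Ker d/\Ker\zeta\cong A$), and the associativity class $[\zeta_\ast k]=\Obs(\zeta)\in H^3(\Coker d,A)$. A reduction is therefore $\Red(\Coker d,A,\zeta_\ast k)$. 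By Proposition~\ref{md2}(iv) together with \eqref{ld1}, the induced $\Coker d$-action on $A$ is trivial, so no twisting of coefficients will occur.

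Next I would invoke the standing hypothesis that $\zeta$-extensions exist. By the necessity half of Theorem~\ref{dlc1}, existence forces $\Obs(\zeta)=[\zeta_\ast k]=0$, which is exactly the vanishing condition making the functor of type $(0,\zeta)$ realizable in Proposition~\ref{tk}(iii). We are thus placed in the realizable branch of that proposition, whose conclusion delivers a bijection between the homotopy classes in question and $H^2(\Coker d,A)$, the trivial action identifying the twisted $H^2_\Gamma$ with the ordinary $H^2$. Composing this with the bijection of Theorem~\ref{pl} then yields the corollary.

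The step I expect to be the main obstacle is the bookkeeping of this identification: one must verify that the group indexing the target cohomology is $\Coker d$ rather than $D$ — that is, that after passing to reductions the relevant structure cocycles and the homotopies between them descend to $\Coker d$, in accordance with the crossed product over $\Coker d$ constructed in Lemma~\ref{lema2} — and that the coefficient action is genuinely trivial. Once the invariants $(\Coker d,A)$ with trivial action are matched to the data $(\Pi,A')$ of Proposition~\ref{tk}(iii), the corollary is an immediate concatenation of the two displayed bijections.
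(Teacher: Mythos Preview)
Your proposal is correct and follows the paper's own route exactly: the paper's entire proof of this corollary is the single sentence ``It follows from Proposition~\ref{tk} and Theorem~\ref{pl},'' and you have spelled out precisely that concatenation. Your explicit identification of the reduction of $\mathbb G/\Ker\zeta$ as $\Red(\Coker d,A,\zeta_\ast k)$, the appeal to Theorem~\ref{dlc1} to force $[\zeta_\ast k]=0$, and the observation (via \eqref{ld1}) that the $\Coker d$-action on $A$ is trivial are exactly the ingredients implicit in the paper's one-line derivation; the bookkeeping concern you flag about $\Coker d$ versus $D$ is likewise glossed over in the paper and is the only point requiring care.
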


\section{Prolongations of a group extension}
In this section we show an application of $\zeta$-extensions of co-type
of a crossed module in order to obtain the results on prolongations
of a group extension in the sense of  \cite{QPC}. Given a
commutative diagram of group homomorphisms
\begin{equation}\label{gr9}\begin{diagram}
\xymatrix{\mathcal  B:&0\ar[r]&\Ker\pi\ar[r]^{i}\ar[d]^{ \zeta
}&B\ar[r]^{\pi}\ar[d]^{\beta}&\Pi\ar[r]
\ar[d]^\eta&1\\
\mathcal E:&0\ar[r]&A\ar[r]^{j}&E\ar[r]^p&D\ar[r]&1 }
\end{diagram}
\end{equation}
where the rows are exact, $\Ker\pi\subset ZB$, $\eta$ is a normal
monomorphism (in the sense that  $\eta \Pi$ is a normal subgroup of
$D$) and $ \zeta $ is an epimorphism.  Then, $\mathcal E$ is said to
be a $( \zeta ,\eta)$-\emph{prolongation} of  $\mathcal B$.



For the quotient group  $\overline{B}=B/\Ker\zeta$, the
homomorphisms $i, \eta\pi, \zeta, \beta$
 in the commutative diagram \eqref {gr9} induce the homomorphisms $\iota,
  d, \overline{\zeta}, \overline{\beta}$, respectively, such that the following diagram
commutes



%
 %
\begin{equation}\label{ct0'}
\begin{diagram}
\xymatrix{\;&\mathrm{Ker}d \ar[r]^\iota\ar@{.>}[d]_{\overline{\zeta}}& \overline{B} \ar[r]^d \ar[d]^{\overline{_\beta}} &D \ar@{=}[d]&\; \\
 0 \ar[r]&A \ar[r]^j&E \ar[r]^p &D  \ar[r] &1,}
\end{diagram}
\end{equation}
Besides, according to Theorem 2
\cite{QPC}, $\mathcal E$  induces a homomorphism $\theta :
D\rightarrow \Aut \overline{B}$ such that the quadruple
$(\overline{B},D,d, \theta) $ is a crossed module.

\begin{thm}
 $\mathcal E$ is a $\overline{\zeta}$-extension of co-type $(\overline{B}, D, d, \theta)$.
\end{thm}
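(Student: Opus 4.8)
The plan is to verify that the diagram \eqref{ct0'} satisfies, point by point, the defining conditions of a $\overline{\zeta}$-extension of co-type $(\overline{B},D,d,\theta)$. According to the definition, I must establish four things: (1) the bottom row $0\ri A\ri E\ri D\ri 1$ is exact; (2) $j(A)\subset Z(E)$; (3) the pair $(\overline{\beta},id_D)$ is a morphism of crossed modules from $(\overline{B},D,d,\theta)$ to $(E,D,p,\theta')$; and (4) the induced abstract kernel is precisely $\overline{\zeta}$, i.e.\ the defining relation $j(\overline{\zeta}\,c)=\overline{\beta}(\iota c)$ holds and $\overline{\zeta}$ factors through $\Coker d$ as in \eqref{ld1}.

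First I would dispose of the data that is essentially given. The exactness of the bottom row and the inclusion $j(A)\subset Z(E)$ come directly from $\mathcal E$ being a group extension of the central type in the hypotheses of \eqref{gr9}, so (1) and (2) require no work. For (3), the key input is the cited Theorem~2 of \cite{QPC}, which supplies the homomorphism $\theta:D\ri\Aut\overline{B}$ making $(\overline{B},D,d,\theta)$ a crossed module; I would then check that $\overline{\beta}$, together with $id_D$, satisfies the two compatibility relations $H_1$ and $H_2$ for crossed-module morphisms. The relation $H_1$, namely $id_D\circ d=p\circ\overline{\beta}$, is exactly the commutativity of the right-hand square of \eqref{ct0'}, which is part of the induced diagram. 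The relation $H_2$, namely $\overline{\beta}(\theta_x\overline{b})=\theta'_x(\overline{\beta}\,\overline{b})$, is where the specific construction of $\theta$ from Theorem~2 of \cite{QPC} must be matched against the conjugation action $\theta'$ on $E$; I would unwind both sides against how the homomorphisms $i,\eta\pi,\beta$ descend to $\iota,d,\overline{\beta}$.

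For (4), I would observe that the construction of the abstract $\zeta$-kernel in \eqref{ld}--\eqref{ld1} is purely formal: given the commuting left square of \eqref{ct0'} with $p\circ\overline{\beta}\circ\iota=d\circ\iota=0$ and $j$ injective, there is a unique $\overline{\zeta}:\Ker d\ri A$ with $j(\overline{\zeta}\,c)=\overline{\beta}(\iota c)$, and the argument of \eqref{ld1} (using $H_2$ and the centrality $j(A)\subset Z(E)$) then shows $\overline{\zeta}$ is constant on $\Coker d$-orbits. I would simply invoke this, checking that the $\overline{\zeta}$ appearing in \eqref{ct0'} as induced from the original $\zeta$ coincides with the unique map produced by this universal property.

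The main obstacle I expect is verifying $H_2$ together with confirming that the induced $\overline{\zeta}$ is genuinely the descent of $\zeta$ rather than merely \emph{some} abstract kernel. The delicate part is bookkeeping: tracking how each map in \eqref{gr9} induces its counterpart in \eqref{ct0'} upon passing to the quotient $\overline{B}=B/\Ker\zeta$, and ensuring the action $\theta$ supplied by \cite{QPC} is compatible with conjugation in $E$ under $\overline{\beta}$. Once $H_2$ is in hand, the trivialization of the $\Coker d$-action on $\mathrm{Im}\,\overline{\zeta}$ follows by the same computation as \eqref{ld1}, so I would present that as a short corollary of the crossed-module morphism property rather than reproving it.
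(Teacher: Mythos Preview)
Your approach is essentially the paper's own: verify that the bottom row is exact with $j(A)\subset Z(E)$, then check that $(\overline{\beta},id_D)$ is a crossed-module morphism. The paper dispatches all of this in two sentences, declaring the verification of $H_1,H_2$ ``easy to see'' and not separately addressing your point~(4), so your plan is a more detailed version of the same argument.

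One correction, however: in your step~(2) you say $j(A)\subset Z(E)$ ``comes directly from $\mathcal E$ being a group extension of the central type in the hypotheses of \eqref{gr9}.'' It does not. The only centrality hypothesis in \eqref{gr9} is $\Ker\pi\subset ZB$, which concerns the \emph{top} row $\mathcal B$; nothing is assumed a priori about $j(A)$ in $E$. The paper obtains $jA\subset ZE$ by citing Theorem~10 of \cite{QPC}, and you should do the same (or reprove it from the surjectivity of $\zeta$ together with the commutativity of the left square and the centrality of $\Ker\pi$ in $B$). Without this, your point~(2) is unjustified, and with it the crossed-module structure on $E\stackrel{p}{\to}D$ via conjugation is not yet available.
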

\begin{proof}
In the diagram \eqref{ct0'}, since the bottom row is exact  and
$jA\subset ZE$ (Theorem 10 \cite{QPC}), the epimorphism $p:E\ri D$
together with the conjugation in $E$ is a crossed module. It is easy
to see that the pair  $(\overline{_\beta}, id_D)$ is a homomorphism of
crossed modules, so $\mathcal E$ is a $\overline{\zeta}$-extension
of co-type $(\overline{B}, D, d, \theta)$.
\end{proof}

$\bullet$ {\it The problem of prolongations of a group extension.}

Given a diagram of group homomorphisms
\begin{align*}   \begin{diagram}
\xymatrix{ \mathcal E:&0\ar[r]&\Ker\pi\ar[r]^{i}\ar[d]^{ \zeta }&B\ar[r]^{\pi}&\Pi\ar[r]\ar[d]^\eta&1\\
&&A&&D }
\end{diagram}
\end{align*}
where the row is exact,  $i$ is an inclusion map, $\Ker\pi\subset
ZB$,
 $\eta$ is a normal monomorphism, $ \zeta $ is surjective,  
and a group homomorphism $\theta : D\rightarrow \Aut(\overline{B})$
such that the quadruple $(\overline{B}, D,d, \theta)$ is a crossed
module (where the notations $\overline{B},d$ are defined as above).
These data are denoted by the triple $(\zeta,\eta,\theta)$, called a
{\it pre-prolongation} of $\mathcal E$. A
 $(\zeta,\eta)$-prolongation of  $\mathcal E$ inducing  $\theta$ is also called a \emph{covering} of the pre-prolongation
$(\zeta,\eta,\theta)$.

The ``prolongation problem'' is that of finding whether there is any
covering of  the pre-prolongation $(\zeta,\eta,\theta)$ of $\mathcal
E$ and, if so, how many.

According to \cite{QPC}, each pre-prolongation $(\zeta,\eta,\theta)$
of $\mathcal E$ induces an obstruction $k$. This obstruction is just
the obstruction of an abstract $\zeta$-kernel of the crossed module
$\overline{B}\stackrel{d}{\rightarrow}D.$ Thus, from the results on
crossed modules in previous sections, one obtains the solution of
the problem of prolongations of a group extension (Theorem 8 and
Theorem 15 in \cite{QPC}).

\begin{thm}\label{ad}  Let $(\zeta, \eta, \theta)$ be a pre-prolongation.

 $\mathrm{i)}$ The vanishing of the obstruction
$[\overline{\zeta}_\ast k]$ in $H^3 (\Coker d, A)$ is necessary and
sufficient for there to exist a
 covering of $(\zeta, \eta,
\theta)$.

$\mathrm{ii)}$  If $[\overline{\zeta}_\ast k]$ vanishes, there is a
bijection
$$\mathrm{Ext}_{(\zeta,\eta)}(D, A)\leftrightarrow H^2(\Coker d, A),$$
where $\Ext_{(\zeta,\eta)}(D,A)$ is the set of equivalence classes
of  $(\zeta,\eta)$-prolongations of the extension   $\mathcal B$
inducing $\theta$.
\end{thm}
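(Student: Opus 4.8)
The plan is to deduce both parts from the corresponding statements for $\overline\zeta$-extensions of co-type of the crossed module $\overline B\stackrel{d}{\rightarrow}D$, namely Theorem \ref{dlc1} and Corollary \ref{hq01}, after identifying coverings of the pre-prolongation $(\zeta,\eta,\theta)$ with such extensions. The bridge is the theorem just proved, which assigns to each $(\zeta,\eta)$-prolongation $\mathcal E$ inducing $\theta$ the $\overline\zeta$-extension of co-type $(\overline B,D,d,\theta)$ displayed in diagram \eqref{ct0'}.

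First I would promote this assignment to a bijection on equivalence classes. Given a covering, passing to $\overline B=B/\Ker\zeta$ yields diagram \eqref{ct0'}, hence a $\overline\zeta$-extension of co-type with structural map $\overline\beta$. Conversely, from a $\overline\zeta$-extension of co-type $(\overline B,D,d,\theta)$ with map $\overline\beta:\overline B\ri E$ I would recover a covering by setting $\beta=\overline\beta\circ q$, where $q:B\ri\overline B$ is the fixed projection; since $\Ker\beta=\Ker\zeta$ and $\eta,\zeta,\theta$ belong to the fixed data, diagram \eqref{gr9} commutes and the abstract kernel it induces is again $\overline\zeta$. Because $q$ is onto, the relation $\omega\beta=\beta'$ defining equivalence of prolongations coincides with the relation $\omega\overline\beta=\overline\beta'$ defining equivalence of co-type extensions, so the two sets of classes agree: $\Ext_{(\zeta,\eta)}(D,A)=\Ext_{\overline B\ri D}(D,A,\overline\zeta)$. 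Note that $\overline\zeta$ is surjective because $\zeta$ is, so the hypotheses of the earlier results are met.

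For part (i) I would use the identification, recorded in the text, of the obstruction $k$ attached to $(\zeta,\eta,\theta)$ with the associativity constraint of the reduction $\Red(\Coker d,\Ker d,k)$ of the categorical group associated to $\overline B\ri D$. Then $[\overline\zeta_\ast k]$ is precisely $\Obs(\overline\zeta)$, and Theorem \ref{dlc1} says its vanishing in $H^3(\Coker d,A)$ is necessary and sufficient for a $\overline\zeta$-extension of co-type to exist, i.e.\ by the bijection for a covering to exist. For part (ii), when the obstruction vanishes, Corollary \ref{hq01} gives a bijection $\Ext_{\overline B\ri D}(D,A,\overline\zeta)\leftrightarrow H^2(\Coker d,A)$; composing with the identification of equivalence classes from the first step produces the asserted bijection $\Ext_{(\zeta,\eta)}(D,A)\leftrightarrow H^2(\Coker d,A)$.

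The step I expect to be the main obstacle is the first one: checking that the assignment is genuinely bijective on equivalence classes, and in particular that the abstract kernel recovered from an arbitrary co-type extension agrees with the prescribed $\overline\zeta$ while the conjugation action agrees with the prescribed $\theta$. This is where the hypotheses that $\eta$ is a normal monomorphism and $\zeta$ is surjective do the work; once these compatibilities are verified, everything else reduces to a direct appeal to Theorem \ref{dlc1} and Corollary \ref{hq01}.
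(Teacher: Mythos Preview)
Your proposal is correct and follows essentially the same route as the paper: reduce both assertions to Theorem~\ref{dlc1} and Corollary~\ref{hq01} via the identification of coverings of $(\zeta,\eta,\theta)$ with $\overline{\zeta}$-extensions of co-type $\overline{B}\stackrel{d}{\rightarrow}D$. The paper's own proof is terser—it asserts the bijection $\mathrm{Ext}_{(\zeta,\eta)}(D,A)\leftrightarrow \mathrm{Ext}_{\overline{B}\ri D}(D,A,\overline{\zeta})$ as ``clear'' rather than spelling out the inverse construction $\beta=\overline{\beta}\circ q$ and the compatibility checks you outline—so your version is, if anything, more explicit about what you correctly flag as the main step.
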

\begin{proof}
i) According to Theorem \ref{dlc1}, the vanishing of
$[\overline{\zeta}_\ast k]$ in $H^3 (\Coker d, A)$ is necessary and
sufficient for there to exist a  $\overline{\zeta}$-extension
$\mathcal E$ of co-type $\overline{B}\stackrel{d}{\rightarrow}D$.
Thanks to the following diagram, this is equivalent to the fact that
 $\mathcal E$ is a covering of the pre-prolongation
$(\zeta,\eta, \theta)$,
\[
  \begin{diagram}
\xymatrix{ \mathcal B:\;\;\;\;\;\;\;0 \ar[r]& \mathrm{Ker}\pi \ar[r]^i\ar[d]^{\zeta_0} &B  \ar[r]^\pi \ar[d]^{p_0}  & \Pi \ar[r]\ar[d]^\eta & 1 \\
 \;& \mathrm{Ker} d \ar[r]^{\iota} \ar[d]^{\overline{\zeta}}  &\overline{B}  \ar[r]^{d} \ar[d]^{\overline{^\beta}}  & D \ar@{=}[d] & \; \\
 \mathcal E:\;\;\;\;\;\;\;0 \ar[r]& A \ar[r]^{j}  &E  \ar[r]^{p}   & D \ar[r]& 1.}
\end{diagram}\]
ii) It is clear that two coverings of the pre-prolongation
$(\zeta,\eta, \theta)$ are equivalent if and only if they are two
equivalent $\overline{\zeta}$-extensions of co-type
$\overline{B}\stackrel{d}{\rightarrow}D$, that is, there is a
bijection
$$\mathrm{Ext}_{(\zeta,\eta)}(D,A)\leftrightarrow \mathrm{Ext}_{\overline{B}\ri D}(D,A,\overline{\zeta}).$$
Now, by Corollary  \ref{hq01}, we have the bijection

\vspace{0.2cm}
$\ \quad \qquad \qquad \qquad \mathrm{Ext}_{(\zeta,\eta)}(D,A)\leftrightarrow H^{2}(\Coker d,A).$
\end{proof}

\begin{center}
{}
\end{center}

\end{document}